\documentclass[reqno,12pt]{amsart}
\usepackage{amsfonts,amssymb}
\usepackage{enumerate}
\usepackage{verbatim}

\newtheorem*{thm}{Main Theorem}
\newtheorem{lem}{Lemma}
\newtheorem*{prop}{Proposition}
\theoremstyle{definition}
\theoremstyle{remark}

\newtheorem{conj}{Conjecture}

\author{Jing-Jing Huang}
\address{
Jing-Jing Huang: Department of Mathematics, University of Toronto,
40 St. George Street, Toronto, Ontario, Canada M5S 2E4}
\email{huang@math.toronto.edu}
\thanks{Research is partially supported by the NSERC Grant A5123.}
 \dedicatory{Dedicated to Professor Wen-Ching Winnie Li on the occasion of her birthday}
\keywords{Jarn\'{i}k type theorem, Hausdorff measure, dual approximation, manifolds}
\subjclass[2010]{Primary 11J83, Secondary 11K60, 11J13}

\begin{document}

\title
{Hausdorff theory of dual approximation on planar curves}

\begin{abstract}
Ten years ago, Beresnevich-Dickinson-Velani \cite{BDV1} initiated a project that develops the general Hausdorff measure theory of dual approximation on non-degenerate manifolds. In particular, they established the divergence part of the theory based on their general ubiquity framework. However, the convergence counterpart of the project remains wide open and represents a major challenging question in the subject. Until recently, it was not even known for any single non-degenerate manifold. In this paper, we settle this problem for all curves in $\mathbb{R}^2$, which represents the first complete theory of its kind for a general class of manifolds.
\end{abstract}
\maketitle

\section{Introduction} \label{s1}

\subsection{Motivation and history}
Let $\mathbf{q}=(q_1,q_2,\cdots,q_n)\in\mathbb{Z}^n$, $\mathbf{x}=(x_1,x_2,\cdots,x_n)\in\mathbb{R}^n$, $\|\cdot\|$ be the distance to the nearest integer and $|\cdot|$ be the $L^\infty$ norm of vectors in $\mathbb{R}^n$. 
In the theory of Diophantine approximation, one considers the inequality
\begin{equation}\label{e1}
\|\mathbf{q}\cdot\mathbf{x}\|<\frac1{|\mathbf{q}|^\nu},
\end{equation}
where $\mathbf{q}\cdot\mathbf{x}=q_1x_1+\cdots+q_nx_n$ is the standard inner product on $\mathbb{R}^n$. In contrast to the \emph{simultaneous approximation}, traditionally this setting \eqref{e1} is referred as the \emph{dual approximation} (see \cite{BD} for more basics).

Dirichlet's theorem asserts that if $\nu\le n$, then for all $x\in\mathbb{R}^n$, the inequality \eqref{e1} admits infinitely many solutions in $\mathbf{q}$. The point $\mathbf{x}$ is called \emph{very well approximable} (abbr. VWA) if for some $\nu>n$, \eqref{e1} holds for infinitely many $\mathbf{q}\in\mathbb{Z}^n$. Thus, for VWA points, the exponent in \eqref{e1} can be improved beyond Dirichlet. Nevertheless, a straightforward application of the Borel-Cantelli lemma reveals that almost all $\mathbf{x}\in\mathbb{R}^n$ are not VWA. Thus the Dirichlet exponent $\nu=n$ is a natural threshold for \eqref{e1} from the metrical point of view.

There are naturally two manners of generalization. One may ask what is the precise threshold for \eqref{e1} to have infinitely many solutions when restricting $\mathbf{x}$ to lying on a proper submanifold of $\mathbb{R}^n$ and/or replacing the right side of \eqref{e1} with a general approximation function $\psi(|\mathbf{q}|)$. This turns out to be the key question that motivates many modern developments of the theory, on which we will do a brief survey.

The theory of Diophantine approximation on manifolds dates back to 1930s when Mahler \cite{Ma} raised his conjecture in transcendence theory, which states that almost all points on the Veronese curve
$$\mathcal{V}_n:=\{(x,x^2,\cdots,x^n)|x\in\mathbb{R}\}$$
are not VWA. This rather difficult question was eventually solved by Sprind\v{z}uk \cite{Sp1} in 1964. After this, Sprind\v{z}uk proposed an important general conjecture.
\begin{conj}[Sprind\v{z}uk]
Any analytic non-degenerate submanifold of $\mathbb{R}^n$ is extremal.
\end{conj}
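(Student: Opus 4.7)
The strategy I would adopt is the dynamical one, translating the Diophantine inequality into a statement about trajectories in the space of unimodular lattices $X_{n+1}=\mathrm{SL}_{n+1}(\mathbb{R})/\mathrm{SL}_{n+1}(\mathbb{Z})$. Via the standard Dani correspondence, a point $\mathbf{x}\in\mathbb{R}^n$ is VWA if and only if the orbit $\{g_t u_{\mathbf{x}}\mathbb{Z}^{n+1}\}_{t\ge 0}$ leaves every compact subset of $X_{n+1}$ at a certain positive rate, where $g_t=\mathrm{diag}(e^{nt},e^{-t},\ldots,e^{-t})$ and $u_{\mathbf{x}}$ encodes $\mathbf{x}$ as the unipotent whose upper-right block is $\mathbf{x}^T$. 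Extremality of a submanifold $M$ is thus equivalent to a quantitative non-divergence statement for the family of orbits parametrized by $\mathbf{x}\in M$.

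First I would fix a local analytic parametrization $\mathbf{f}:U\to M$ of the non-degenerate submanifold, with $U\subset\mathbb{R}^d$ open, and study the composition $\varphi(\mathbf{u}):=g_t u_{\mathbf{f}(\mathbf{u})}\mathbb{Z}^{n+1}$. The key analytic input is to establish that, for every primitive rank-$k$ subgroup $\Gamma\subset\mathbb{Z}^{n+1}$ with basis $v_1,\ldots,v_k$, the function $\mathbf{u}\mapsto\|g_t u_{\mathbf{f}(\mathbf{u})}(v_1\wedge\cdots\wedge v_k)\|$ is $(C,\alpha)$-\emph{good} on small balls in $U$: on any such ball $B$, the sublevel set where this function is $\varepsilon$ times smaller than its supremum has measure at most $C\varepsilon^{\alpha}|B|$. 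For an analytic non-degenerate $\mathbf{f}$, the $(C,\alpha)$-good property follows from classical bounds on sublevel sets of real-analytic functions, and non-degeneracy ensures that no non-trivial linear combination of $1,f_1,\ldots,f_n$ vanishes on an open set, which supplies the indispensable lower bound on the sup norms appearing in the denominator.

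Armed with these properties, I would invoke the Kleinbock--Margulis quantitative non-divergence theorem for $(C,\alpha)$-good maps into $X_{n+1}$: the measure of $\mathbf{u}\in B$ for which $\varphi(\mathbf{u})$ contains a vector of length less than $\rho$ is at most $C'\rho^{\alpha'}|B|$. Translated back through the Dani correspondence, this yields, for each $\nu>n$ and each compact $K\subset U$, a power-saving estimate
$$\mu_M\bigl\{\mathbf{x}\in\mathbf{f}(K):\|\mathbf{q}\cdot\mathbf{x}\|<|\mathbf{q}|^{-\nu}\bigr\}\ll|\mathbf{q}|^{-n-\delta}$$
with $\delta=\delta(\nu-n)>0$. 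Summing over $\mathbf{q}\in\mathbb{Z}^n\setminus\{0\}$ produces a convergent series, and Borel--Cantelli then delivers the conclusion that almost no $\mathbf{x}\in M$ is VWA.

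The main obstacle is the non-divergence theorem itself, whose proof proceeds by induction over flags of rational subspaces of $\mathbb{Z}^{n+1}$ combined with a Besicovitch-type covering argument on $B$; the non-degeneracy hypothesis enters precisely at the base case, since it is what rules out affine sub-loci on which the required lower bound on sup norms would fail. The remaining delicate points are the uniformity of the constants $C,\alpha$ as one varies the time $t$ and the basis vectors $v_i$, and the bookkeeping required to cover all exterior powers within a single induction.
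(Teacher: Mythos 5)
The statement you were asked to prove is not actually proved in this paper: it is stated as a \emph{conjecture} (Sprind\v{z}uk's conjecture) purely as historical motivation, and the paper merely records that it was resolved by Kleinbock and Margulis \cite{KM} in 1998 via unipotent dynamics. There is therefore no ``paper's own proof'' to compare against. Your sketch is, in outline, a faithful rendering of the Kleinbock--Margulis argument the paper cites: the Dani correspondence reformulating VWA as a rate of escape of $g_t u_{\mathbf{x}}\mathbb{Z}^{n+1}$, the $(C,\alpha)$-good property for the relevant functions of the parameter, the quantitative non-divergence theorem proved by induction over rational flags with a Besicovitch covering argument, and the role of non-degeneracy in supplying the needed lower bounds on sup norms.

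One caveat worth flagging in your write-up: the way you phrase the Borel--Cantelli step --- a per-$\mathbf{q}$ estimate $\ll|\mathbf{q}|^{-n-\delta}$ summed over $\mathbf{q}$ --- is not how the non-divergence theorem is actually deployed. Quantitative non-divergence controls the measure of the set of $\mathbf{u}$ for which the lattice $g_t u_{\mathbf{f}(\mathbf{u})}\mathbb{Z}^{n+1}$ contains \emph{some} short vector; this bounds, in one stroke, the contribution of \emph{all} $\mathbf{q}$ with $|\mathbf{q}|\asymp e^t$, and one then sums the exponentially small bound $\ll e^{-\gamma\alpha t}$ over a discrete sequence of times $t$. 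Recasting this as a vector-by-vector estimate is not automatic, because for individual $\mathbf{q}$ nearly orthogonal to the tangent plane the naive sublevel-set bound degrades and the non-divergence machinery is exactly what rescues the dyadic block as a whole. The conclusion is the same, but the bookkeeping should be organized by time $t$ (equivalently by dyadic $|\mathbf{q}|$), not by individual $\mathbf{q}$.
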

A manifold $\mathcal{M}$ is \emph{extremal} if almost all points of $\mathcal{M}$ (with respect to the natural induced measure on $\mathcal{M})$ are not VWA. A manifold $\mathcal{M}$ is \emph{non-degenerate} if at almost all points $\mathbf{x}\in\mathcal{M}$, $\mathcal{M}$ has at most finite order of contact with any hyperplane that passes through $\mathbf{x}$. It is easily seen that an analytic manifold is non-degenerate if and only if, around almost all points, the local chart functions together with 1 are linearly independent over $\mathbb{R}$. Essentially, non-degeneracy guarantees that the manifold is sufficiently curved so as to deviate from any hyperplanes.

The philosophy underlying this conjecture can be summarized as that, in the sense of Diophantine approximation, a generic point on a generic submanifold of $\mathbb{R}^n$ should behave like a generic point in $\mathbb{R}^n$. In other words, as far as certain Diophantine approximation properties are concerned, one cannot distinguish a generic point on a non-degenerate submanifold of $\mathbb{R}^n$ from a generic point in $\mathbb{R}^n$. We will see that this very same idea arises in most of the major (solved or unsolved) questions in the area of metric Diophantine approximation on manifolds.

The fundamental Sprind\v{z}uk conjecture was proved by Kleinbock and Margulis \cite{KM} in 1998 using dynamical tools based on unipotent flows on homogeneous spaces. Their proof has served as a catalyst for many more recent developments (They actually proved the stronger Baker-Sprind\v{z}uk conjecture). Among other things, the Khintchine-Groshev theory is a natural generalization of the theory of extremal manifolds obtained by replacing the right side of \eqref{e1} with a monotonic function $\psi$ of $|\mathbf{q}|$. 

From now on, we will call the decreasing function $\psi:\mathbb{N}\rightarrow\mathbb{R}^+$ an \emph{approximation function}. For a given approximation function $\psi$, let

\begin{equation}\label{e2}
A_n(\psi):=\{\mathbf{x}\in\mathbb{R}^n:\exists^\infty \mathbf{q}\in\mathbb{Z}^n\textrm{ such that } \|\mathbf{q}\cdot\mathbf{x}\|<\psi(|\mathbf{q}|)\}.
\end{equation}
Here $\exists^\infty$ means ``there exists infinitely many ...". 

Let $|\cdot|_\mathcal{M}$ denote the induced Lebesgue measure on $\mathcal{M}$. One wants to study the size of $A_n(\psi)\cap\mathcal{M}$ under $|\cdot|_\mathcal{M}$. Indeed, for $\mathcal{M}=\mathcal{V}_n$, A. Baker \cite{Bak} conjectured that

\begin{conj}[A. Baker]\label{c2}
\begin{equation*}
|A_n(\psi)\cap\mathcal{M}|_\mathcal{M}=\left\{
\begin{array}{lll}
\textnormal{\small{Z\tiny{ERO}}}&\textnormal{if}& \displaystyle\sum_{q=1}^{\infty}q^{n-1}\psi(q)<\infty\\
\textnormal{\small{F\tiny{ULL}}}&\textnormal{if}&
\displaystyle\sum_{q=1}^{\infty}q^{n-1}\psi(q)=\infty.
\end{array}
\right.
\end{equation*}
\end{conj}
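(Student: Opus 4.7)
I would handle convergence and divergence separately, with convergence by far the harder half. Set $P_\mathbf{q}(x):=q_1x+q_2x^2+\cdots+q_nx^n$ so that $(x,x^2,\ldots,x^n)\in A_n(\psi)\cap\mathcal{V}_n$ precisely when $|P_\mathbf{q}(x)-p|<\psi(|\mathbf{q}|)$ admits infinitely many solutions $(\mathbf{q},p)\in\mathbb{Z}^n\times\mathbb{Z}$. By countable additivity it suffices to work on a fixed compact interval $I\subset\mathbb{R}$. Grouping $\mathbf{q}$ dyadically $|\mathbf{q}|\sim Q=2^k$ and setting $E_k:=\bigcup_{|\mathbf{q}|\sim Q}\{x\in I:\|P_\mathbf{q}(x)\|<\psi(Q)\}$, the Borel--Cantelli lemma concludes the convergence half once $\sum_k|E_k|<\infty$ is established. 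In the ``generic'' regime where $|P_\mathbf{q}'|\gtrsim Q$ on $I$, each arc $\{x:|P_\mathbf{q}(x)-p|<\psi(Q)\}$ has length $\lesssim\psi(Q)/Q$, the number of relevant integers $p$ is $O(Q|I|)$, and summing over the $O(Q^n)$ vectors $\mathbf{q}$ yields $|E_k|=O(Q^n\psi(Q))$, which matches the hypothesized summability of $\sum_q q^{n-1}\psi(q)$ exactly.

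The technical heart of the convergence proof lies in the ``resonant'' regime in which $P_\mathbf{q}'$, or even several higher derivatives, are anomalously small on a sub-interval of $I$, rendering the naive derivative estimate useless. I would partition such $\mathbf{q}$ according to the least integer $k$ with $|P_\mathbf{q}^{(k)}|\gtrsim Q$ on the relevant piece (such a $k\le n$ always exists because $P_\mathbf{q}^{(n)}=n!\,q_n$), control the measure on each piece by a local Taylor or iterated-mean-value argument, and reduce to counting integer vectors $\mathbf{q}$ whose polynomial is abnormally flat on small scales. Producing a sharp upper bound on this count is the principal obstacle, and it is precisely what the Bernik--Kleinbock--Margulis quantitative non-divergence estimate on the space of unimodular lattices provides; carefully tracking the exponents through this machinery to extract the critical weight $q^{n-1}$ is the true technical core of the argument.

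For the divergence direction I would invoke the general ubiquity framework of Beresnevich--Dickinson--Velani cited in the abstract. The required input is a ubiquitous family of ``resonant'' rational points lying close to $\mathcal{V}_n$, which in this setting amounts to an effective Dirichlet-type statement on $\mathcal{V}_n$ together with a lower bound for the number of such rationals in small boxes. Once this input is assembled, the ubiquity machinery upgrades the divergence of $\sum_q q^{n-1}\psi(q)$ to a full-Lebesgue conclusion on $\mathcal{V}_n\cap I$, completing the dichotomy.
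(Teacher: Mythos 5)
Note first that the paper does not prove Conjecture~\ref{c2}: it is stated as historical background, with the proofs attributed to the literature---\cite{Be2,Ber} for $\mathcal{M}=\mathcal{V}_n$ and \cite{BKM,BBKM,Be1} for general non-degenerate manifolds. There is therefore no ``paper's own proof'' against which to compare your sketch, and I evaluate it on its own terms.

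At a high level your outline is a fair description of the modern route to the general-manifold version of the theorem: dyadic blocking with the naive mean-value/derivative estimate handles the generic $\mathbf{q}$'s (and your computation that this gives $|E_k|=O(Q^n\psi(Q))$, summing correctly against $\sum_q q^{n-1}\psi(q)$, is right); the Kleinbock--Margulis / Bernik--Kleinbock--Margulis quantitative non-divergence estimates on the space of unimodular lattices are indeed the tool that disposes of the resonant (small-derivative) regime in the convergence direction; and ubiquity-type arguments underlie divergence. Two caveats, though. First, this is a plan rather than a proof: you correctly identify that the resonant regime and the task of extracting the critical weight $q^{n-1}$ from the non-divergence machinery is ``the true technical core,'' but you do not attempt it, so the entire difficulty is delegated to a cited black box. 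Second, the original proofs for $\mathcal{V}_n$ that the paper actually cites (\cite{Be2,Ber}) do not go through homogeneous dynamics at all; they rest on Sprind\v{z}uk-style essential/inessential domains and polynomial counting, so your route is a plausible modern one rather than the historical $\mathcal{V}_n$-specific one. A minor slip: in the dual setting the ``resonant'' objects feeding a ubiquity argument are not rational points near $\mathcal{V}_n$ but rather the level sets $\{x: q_1x+\cdots+q_nx^n=p\}$, and the Lebesgue divergence direction of Conjecture~\ref{c2} was in fact settled in \cite{BBKM} via regular systems, predating the \cite{BDV1} ubiquity framework you invoke.
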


Here $|\cdot|_\mathcal{M}=\textnormal{\small{F\tiny{ULL}}}$ means that the complement on $\mathcal{M}$ has measure zero. The highly nontrivial special case $\mathcal{V}_n$ was finally solved in \cite{Be2,Ber}. More importantly, it has been subsequently shown that the conjecture \ref{c2} holds for all non-degenerate submanifolds of $\mathbb{R}^n$ \cite{BKM,BBKM,Be1}. So the Lebesgue measure theory for Groshev type approximation on manifolds is complete. 

Beyond this, one may develop the much deeper Hausdorff theory in which one replaces the coarse Lebesgue measure $|\cdot|_{\mathcal{M}}$ with finer Hausdorff measures $\mathcal{H}^s$. We will recall the definitions of Hausdorff measure and dimension in \S{\ref{s2}}. As a byproduct, this approach yields the Hausdorff dimension of the set in question. This type of investigation seems to be initiated by A. Baker and W.M. Schmidt \cite{BS} in 1970, who proved that

\begin{equation}\label{e4}
\frac{n+1}{\nu+1}\le\dim(A_n(q^{-\nu})\cap\mathcal{V}_n)\le \frac{2(n+1)}{\nu+1}\quad\text{for any }\nu>n.
\end{equation} 

In the same paper, they also conjectured that the left inequality in \eqref{e4} should be equality, which was eventually solved by Bernik \cite{Ber1} in 1983. In 2000, Dickinson and Dodson \cite{DD} proved that for any extremal submanifold $\mathcal{M}$ of $\mathbb{R}^n$ one has the lower bound

 \begin{equation}\label{e5}
\dim(A_n(q^{-\nu})\cap\mathcal{M})\ge \frac{n+1}{\nu+1}+\dim\mathcal{M}-1\quad\text{for any }\nu>n.
\end{equation} 

Proving that the lower bound in \eqref{e5} is actually sharp represents a major open question, which is called the Generalized Baker-Schmidt Problem for Hausdorff dimension in \cite{BBV}.

\begin{conj}[GBSP for Hausdorff dimension]\label{c3}
For any submanifold $\mathcal{M}$ of $\mathbb{R}^n$ which is non-degenerate everywhere except possibly on a set of Hausdorff dimension $\le D$,  one has
\begin{equation*}
\dim(A_n(q^{-\nu})\cap\mathcal{M})=D\quad\text{for any }\nu>n
\end{equation*}
where $$D:=\frac{n+1}{\nu+1}+\dim\mathcal{M}-1.$$
\end{conj}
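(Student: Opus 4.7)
The plan is to focus on the upper bound, since the matching lower bound $\dim(A_n(q^{-\nu}) \cap \mathcal{M}) \geq D$ on a non-degenerate submanifold is already supplied by \eqref{e5} (extremality itself being guaranteed by Kleinbock-Margulis). In the planar case $n=2$ announced by this paper, $D = 3/(\nu+1)$. After covering the non-degenerate part of $\mathcal{M}$ by finitely many graph patches $\{(x, f(x)) : x \in I\}$ with $f'' \neq 0$, and observing that the degenerate locus excluded by the conjecture's hypothesis already has dimension $\leq D$ and is therefore absorbed into the target bound, the whole problem reduces to
\[
\dim \{ x \in I : \exists^{\infty} (\mathbf{q}, p) \in \mathbb{Z}^{2} \times \mathbb{Z}, \ |q_1 x + q_2 f(x) - p| < |\mathbf{q}|^{-\nu} \} \leq \tfrac{3}{\nu+1}.
\]

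My approach is a Hausdorff-Cantelli cover of this lim-sup set by the natural resonant sets $E_{\mathbf{q}, p} = \{x : |q_1 x + q_2 f(x) - p| < |\mathbf{q}|^{-\nu}\}$. Each $E_{\mathbf{q}, p}$ is a union of at most two intervals of length $\asymp |\mathbf{q}|^{-\nu}/|q_1 + q_2 f'(\xi_{p})|$ for some $\xi_{p}$ inside, so for $s > 3/(\nu+1)$ I must prove $\sum_{\mathbf{q}, p}(\mathrm{diam}\, E_{\mathbf{q},p})^{s} < \infty$. I would carry this out by a double dyadic decomposition: $|\mathbf{q}| \asymp Q$ and $|q_1 + q_2 f'| \asymp R$. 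In the generic range $R \asymp Q$ the $\asymp Q^{3}$ admissible triples each contribute $\asymp Q^{-(\nu+1)s}$, giving a sum $\asymp Q^{3 - (\nu+1)s}$ that converges geometrically precisely at the critical exponent. For intermediate $R \ll Q$, the planar non-degeneracy confines the relevant $\xi_{p}$ to an $O(R/Q)$-window about the critical point $\xi^{\star}$ where $f'(\xi^{\star}) = -q_1/q_2$, inside which the $\xi_{p}$ are $\asymp 1/R$-spaced; careful bookkeeping then shows the dyadic sum in $R$ remains controlled by the same exponent.

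The main obstacle is the deep-resonance regime, where $R$ is so small that $E_{\mathbf{q},p}$ exceeds the window and the linear approximation to $g_{\mathbf{q}}(x) = q_1 x + q_2 f(x)$ about $\xi^{\star}$ must be replaced by its quadratic Taylor expansion $g_{\mathbf{q}}(\xi^{\star}) + \tfrac{1}{2} q_2 f''(\xi^{\star})(x - \xi^{\star})^{2}$. The characteristic interval length then shrinks to $\asymp Q^{-(\nu+1)/2}$, and the admissible triples are pinned down by the extra Diophantine constraint
\[
\bigl| p + q_2 \tilde f(-q_1/q_2) \bigr| < |\mathbf{q}|^{-\nu},
\]
where $\tilde f$ is the Legendre transform of $f$. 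Thus deep resonance on $(x, f(x))$ transfers exactly to a simultaneous approximation problem for rationals with denominator $q_2$ by values of $\tilde f$, i.e., to a Hausdorff-convergence question on the Legendre-dual planar curve $\{(u, \tilde f(u))\}$, which is itself non-degenerate. The hardest work will be quantifying this transference sharply, via a counting estimate for rational points close to the dual curve together with a delicate error analysis of the higher-order Taylor remainders; this is where the genuinely planar geometric input enters, and a successful such estimate exactly closes the Hausdorff-Cantelli sum at the critical exponent $3/(\nu+1)$.
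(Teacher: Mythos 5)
Your proposal traces essentially the same route as the paper: reduce to a Hausdorff--Cantelli bound on the resonant intervals, stratify by the size of $|q_1+q_2f'|$, and in the deep-resonance case pass via the quadratic Taylor expansion at the critical point $x_0$ and the Legendre/dual curve $f^*$ to a problem of counting rational points near a non-degenerate planar curve. The counting input you flag as the remaining crux is precisely Huxley's estimate (the paper's Lemma~\ref{l3}), which the paper applies directly when $\|q_2 f^*(q_1/q_2)\|<2\psi(q)$ and in a dyadically weighted form (Lemma~\ref{l4}) when $\|q_2 f^*(q_1/q_2)\|\ge 2\psi(q)$.
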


Besides the Veronese curve $\mathcal{V}_n$, Conjecture \ref{c3} was also known for non-degenerate planar curves due to R.C. Baker \cite{Ba}. By setting $\psi(q)=q^{-\nu}$, we immediately see that Conjecture \ref{c3} is an easy corollary of the following very precise and delicate conjecture, which is referred as the Generalized Baker-Schmidt Problem for Hausdorff measure in \cite{BBV}.

\begin{conj}[GBSP for Hausdorff measure]\label{c4}
For any approximation function $\psi$, any $s>m-1$ and any submanifold $\mathcal{M}$ of $\mathbb{R}^n$ of dimension $m$ which is non-degenerate everywhere except possibly on a set of zero Hausdorff $s$-measure\footnote{In their original formulation of GBSP for Hausdorff dimension and measure, the authors of \cite{BBV} seem to have overlooked this absolutely necessary condition.}, one has
\begin{equation*}
\mathcal{H}^s(A_n(\psi)\cap\mathcal{M})=\left\{
\begin{array}{lll}
0&\textnormal{if}& \displaystyle\sum_{q=1}^{\infty}\left(\frac{\psi(q)}{q}\right)^{s+1-m}q^n<\infty,\\
\mathcal{H}^s(\mathcal{M})&\textnormal{if}&
\displaystyle\sum_{q=1}^{\infty}\left(\frac{\psi(q)}{q}\right)^{s+1-m}q^n=\infty.
\end{array}
\right.
\end{equation*}
\end{conj}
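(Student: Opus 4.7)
The divergence half of Conjecture \ref{c4} for non-degenerate planar curves already follows from the general ubiquity framework of Beresnevich-Dickinson-Velani, so I would focus on the convergence half in the case $n=2$, $m=1$: assuming $\sum_q(\psi(q)/q)^s q^2<\infty$ and $\mathcal{C}\subset\R^2$ non-degenerate, show that $\mathcal{H}^s(A_2(\psi)\cap\mathcal{C})=0$. After localizing to a compact sub-arc and using non-degeneracy to realize $\mathcal{C}$ as the graph of some $f\in C^{(2)}(I)$ with $|f''|$ bounded between two positive constants, I would index the resonant sets by integer triples, writing
$$R(q_1,q_2,p)=\{x\in I:|F(x)|<\psi(q)\},\qquad F(x)=q_1x+q_2f(x)-p,\quad q=|(q_1,q_2)|_\infty.$$
Then $A_2(\psi)\cap\mathcal{C}\subset\limsup_{q\to\infty}\bigcup_{(q_1,q_2,p)}R(q_1,q_2,p)$ and, by the Hausdorff-Cantelli lemma, it suffices to cover each $R$ by intervals whose total $s$-content is summable.

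At each dyadic scale $Q\le q<2Q$ I would split the triples according to the behaviour of $F'(x)=q_1+q_2f'(x)$ on $R$. In the \emph{transverse regime} $|F'|\gtrsim Q$ throughout $R$, so $|R|\lesssim\psi(Q)/Q$; the number of relevant triples at scale $Q$ is $\lesssim Q^2$ (a factor $Q$ from the choice of $(q_1,q_2)$ with $|\cdot|_\infty=Q$ and another factor $Q$ from the range of admissible $p$), giving total contribution
$$\sum_Q Q^2\left(\frac{\psi(Q)}{Q}\right)^{s}\asymp\sum_q q^{2-s}\psi(q)^s<\infty,$$
which is precisely the hypothesis of Conjecture \ref{c4}.

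The \emph{tangential regime}, where $|F'(x_0)|\ll Q$ for some $x_0\in R$, is the heart of the matter. Using $|F''|\asymp|q_2 f''|\gtrsim Q$ one gets $|R|\lesssim(\psi(Q)/Q)^{1/2}$, a wider interval, and this bound used naively is far too weak. The saving is that tangentiality forces $(q_1/q_2,p/q_2)$ to lie very close to a point on the Legendre dual curve $\mathcal{C}^*=\{(f'(x),\,f(x)-xf'(x)):x\in I\}$, so counting tangential triples at scale $Q$ is the problem of counting rational points near a non-degenerate planar curve. I would then invoke the sharp counting estimate (of the sort developed in the author's own work on rational points near planar curves) stating that, for admissible $\delta$, the number of triples with $|(q_1,q_2)|_\infty\asymp Q$ and dual distance $\le\delta$ to $\mathcal{C}^*$ is $\lesssim \delta Q^3+Q$, and plug this bound into the Hausdorff-Cantelli sum.

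The main obstacle is that the assumed convergence $\sum q^{2-s}\psi(q)^s<\infty$ is critical: any loss in the counting input or in the patching of regimes is fatal. Concretely, one must (i) handle the transition between the transverse and tangential regimes without double counting, and (ii) deal with the range where the main term $\delta Q^3$ and the error term $Q$ in the counting estimate are of the same order. My plan is to further stratify the tangential regime by a second dyadic parameter $T$ measuring $|F'|\asymp T$ (so that $|R|\lesssim\psi(Q)/T$ when $T\gtrsim\sqrt{Q\psi(Q)}$ and $|R|\lesssim(\psi(Q)/Q)^{1/2}$ otherwise) and then apply the counting estimate at each $(Q,T)$. Balancing the resulting double sum against the hypothesis uniformly for all $s>0$ and all decreasing $\psi$ is where the principal technical difficulty lies, and where one must deploy the planar rational-point machinery at its sharpest.
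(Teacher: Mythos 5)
The overall route you sketch is the one the paper actually follows: localize $\mathcal{C}$ to a graph with $|f''|$ bounded above and below, index resonant intervals by triples $(q_1,q_2,p)$, invoke the Hausdorff--Cantelli lemma, split into transverse and tangential regimes, and in the tangential regime pass to the Legendre dual curve so that a Huxley-type count of rationals near a planar curve controls the sum. You correctly identify the dual curve $f^*$ (with $(f^*)'=g$, the inverse of $-f'$) as the pivotal object and the counting input as the right ingredient.

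Two things you treat as peripheral are in fact where the argument lives or dies. First, you assert the convergence hypothesis is so critical that ``any loss in the counting input is fatal'' and that one must ``deploy the rational-point machinery at its sharpest.'' This is precisely backwards for the dual problem: the paper applies Huxley's estimate $\ll_\varepsilon\delta^{1-\varepsilon}R^2+R\log(2R)$, which already concedes an $\varepsilon$-loss compared to what you quote, and moreover remarks that any bound of the shape $\delta^{\alpha}R^{\beta}+R^{3/2-\varepsilon}$ with $\beta<\tfrac32+2\alpha$ would still suffice. The dual case is markedly more forgiving than the simultaneous one (where Vaughan--Velani genuinely needed Huxley in the strongest form); the difficulty lies elsewhere. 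Second, the step you defer, ``balancing the double sum uniformly in $s$ and $\psi$,'' will in fact fail as stated, because the secondary term $R\log(2R)$ in Huxley dominates when $\psi$ decays very rapidly. The missing idea is a preliminary reduction to $\psi(q)\ge q^{1-(3+\varepsilon_0)/s}$, obtained by replacing $\psi$ with $\hat\psi(q):=\max\bigl\{\psi(q),\,q^{1-(3+\varepsilon_0)/s}\bigr\}$, which still has $\sum\hat\psi(q)^sq^{2-s}<\infty$ and majorizes $\psi$. Without this reduction the dyadic estimates never close; with it, they close with room to spare. One smaller point: your stratification by $T\asymp|F'|$ can be made to work, but the paper's bookkeeping via the single identity $|F'(x)|\asymp|q_2(F(x)-F(x_0))|^{1/2}$ is cleaner, since it shows $|F'|$ on $\mu(q_1,q_2,p)$ is governed by $|p-p_0|$ when $p\neq p_0$ (disposed of by direct summation in $p$, no counting lemma needed) and by $\|F(x_0)\|$ when $p=p_0$ (the only triples fed into Huxley).
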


Clearly, the case $s=m$ reduces to the Lebesgue measure theory and hence was known. The case $s>m$ is simply trivial. When $s<m$, the divergence case of Conjecture \ref{c4} has been established by Beresnevich, Dickinson and Velani \cite{BDV1} using their general ubiquity framework. However, the convergence case is much more difficult and represents a challenging question. Indeed, it is not known for any non-degenerate manifolds except for the parabola which was solved very recently \cite{Hus}. In this paper, we will settle this conjecture for all non-degenerate planar curves. This is the first result of its kind for a general class of manifolds.

Before stating our main theorem, we should also remark that there is a completely parallel  line of developments in the context of simultaneous approximation on manifolds, in which one considers the set
\begin{equation}\label{e30}
S_n(\psi):=\{\mathbf{x}\in\mathbb{R}^n:\exists^\infty {q}\in\mathbb{Z}\textrm{ such that } \max_{1\le i\le n}\|qx_i\|<\psi(q)\},
\end{equation} 
instead of the set $A_n(\psi)$, and the measure-theoretic property of the intersection $S_n(\psi)\cap\mathcal{M}$. One can then formulate a complete analogue of Conjecture \ref{c4}, namely the Hausdorff theory for simultaneous approximation on manifolds (see \cite[Conjecture 8.2]{Be3} for the precise statement). This analogue of Conjecture \ref{c4} for all non-degenerate planar curves has been remarkably established by Vaughan and Velani \cite{VV} for the convergence case, and Beresnevich, Dickinson and Velani \cite{BDV2} for the divergence case. Moreover, the divergence case of this analogous conjecture for all non-degenerate analytic manifolds has been subsequently solved by Beresnevich \cite{Be3}. Despite the superficial similarity between the simultaneous and dual approximations on manifolds, the two problems exhibit quite differing natures and difficulties; hence the solution of one of them unfortunately does not yield that of the other one by any reasonable means.  Our main theorem below is a natural counterpart of the result of Vaughan and Velani \cite{VV} in the dual setting, and hence brings the development of the dual approximation on manifolds in line with that of the simultaneous approximation on manifolds. 

\subsection{The main result}

\begin{thm}\label{t0}
Conjecture \ref{c4} holds when $\mathcal{M}$ is a curve in $\mathbb{R}^2$. Namely,
for any approximation function $\psi$, $s\in(0,1]$ and any $C^2$ planar curve $\mathcal{C}$ which is non-degenerate everywhere except possibly on a set of zero Hausdorff $s$-measure, we have
\begin{equation*}
\mathcal{H}^s(A_2(\psi)\cap\mathcal{C})=\left\{
\begin{array}{lll}
0&\textnormal{if}& \displaystyle\sum_{q=1}^{\infty}\left(\frac{\psi(q)}{q}\right)^{s}q^2<\infty,\\
\mathcal{H}^s(\mathcal{C})&\textnormal{if}&
\displaystyle\sum_{q=1}^{\infty}\left(\frac{\psi(q)}{q}\right)^{s}q^2=\infty.
\end{array}
\right.
\end{equation*}
\end{thm}

As remarked above, the divergence case in the Main Theorem is established in \cite{BDV1}. So the convergence case is the new substance, which we will prove in \S{\ref{s3}}. 

One can replace the Hausdorff $s$-measure $\mathcal{H}^s$ in the Main Theorem with a Hausdorff $g$-measure $\mathcal{H}^g$ for a general dimension function $g$, assuming the following growth condition\footnote{Incidentally, this very same condition was also assumed in \cite{Hu}.} on $g$ 
$$x^{s_1}\ll g(x)\ll x^{s_2}$$
with $2s_1<3s_2$. However, this reveals no additional information on the problem and will inevitably obscure the main thrust of the paper. We are happy to leave the details to the reader.

\subsection{Further general discussions}

Our approach in this paper is partially motivated by the work of Schmidt \cite{Sc1, Sc2}, who showed in 1964 that every $C^3$ non-degenerate curve is extremal.  This result of Schmidt, together with Sprind\v{z}uk's solution of Mahler's conjecture \cite{Sp1}, undoubtedly represent two pioneering work in the subject. In spite of this, the two methods they introduced differ quite significantly in nature. Sprind\v{z}uk invented the method which is now called \emph{the method of essential and inessential domains} (see \cite{Sp1, Sp2}); while Schmidt's method is based on an interesting arithmetic counting problem \cite{Sc1} which is now understood to be closely related to the problem of counting rational points lying near planar curves. As discussed above, the Hausdorff theory is much deeper than the extremal theory and hence requires very strong arithmetic input, which Schmidt's result \cite{Sc1} unfortunately fails to deliver.  The novel feature of this paper is that we adapt a counting result of the same type due to Huxley \cite{Hu}, which is essentially best possible, into an analytic form that is particularly 
suited for the application in Schmidt's method. This connection is established through the use of the \emph{dual curve}, which appears, in one form or another, in the previous works \cite{Bad,BDV2,BZ,Hua,Hu,VV}.

Another interesting feature to be seen, on checking our argument carefully, is that we do not need the full power of Huxley \cite{Hu}. Actually, we still obtain results of the same quality when the upper bound in Lemma \ref{l3} is relaxed to be $\ll\delta^{\alpha}R^\beta+R^{3/2-\varepsilon}$ for some $\varepsilon>0$ and $\alpha,\beta$ satisfying $\beta<\frac32+2\alpha$. In contrast, we recall that  when the corresponding Jarn\'{i}k type theorem for simultaneous approximation on planar curves was established in \cite{VV} one had to use Huxley in the strongest form. This discrepancy is probably not surprising since it is generally believed that dual approximation on manifolds is not more difficult than simultaneous approximation on manifolds. Nevertheless, it is indeed a bit surprising that the dual case for planar curves remained open for almost a decade after the simultaneous case was solved completely \cite{BDV2,VV}! 

We emphasize that the relation between the distribution of rational points near manifolds and simultaneous approximation on manifolds is widely known and well understood \cite{Be3,BDV2,BZ,Hua,VV}. However, to the best of our knowledge, the explicit relation between this arithmetic counting problem and dual approximation on manifolds seems to have never appeared in the literature; hence it is the purpose of this paper to address this issue. Beyond Khintchine's transference principle, little has been known about the relation between simultaneous and dual approximations.

We remark in passing that GBSP has been extended to the settings of inhomogeneous approximation and/or multi-variable approximation function. See \cite{Bad,BBV,BV2} and references therein. The inhomogeneous and multiplicative theory have also been developed in the simultaneous case \cite{BVV,BV1}. It is likely that the method in this paper can be adapted in order to cope with the above variations of the original problem. We hope to return to this in a future publication. We note however that it is straightforward to see that one may insert an inhomogeneous constant term $\theta$ in our problem and the whole argument still works.

\section{Hausdorff measure and dimension}\label{s2}
Here for completeness we recall the the definitions of Hausdorff measure and dimension. For more examples and discussions, one is referred to the book of Falconer \cite{Fa} and references therein. Let $X$ be a subset of $\mathbb{R}^n$ and $\{B_i\}$ be a collection of balls in $\mathbb{R}^n$ with diameters $\text{diam}(B_i)<\rho$ for a fixed $\rho>0$ such that $X\subseteq\bigcup_i B_i$, which is called a \emph{$\rho$-cover of $X$}.

 Now define
$$
\mathcal{H}_\rho^{s}(X):=\inf\left\{\sum_{i}(\text{diam}(B_i))^s\right\}
$$
where the infimum is taken over all possible $\rho$-covers of $X$. The Hausdorff $s$-measure $\mathcal{H}^s$ is defined as
$$
\mathcal{H}^s(X):=\lim_{\rho\rightarrow0}\mathcal{H}_\rho^s(X).
$$ 
Due to monotonicity, this limit either converges to some finite number or approaches $\infty$.

When $s$ is a positive integer, $\mathcal{H}^s$ is equivalent to the usual $s$-dimensional Lebesgue measure.

Moreover, the \emph{Hausdorff dimension} $\dim X$ of $X$ is defined by
$$
\dim X:=\inf\{s:\mathcal{H}^s(X)=0\}=\sup\{s:\mathcal{H}^s=\infty\}.
$$
Namely the Hausdorff dimension of $X$ is the unique point $s$ where the Hausdorff $s$-measure jumps from $\infty$ to $0$. It is particularly useful when analyzing the sizes of fractals. Though we do not attempt to make this precise, we will remark in passing that the set $A_n(\psi)$ in our question behaves in many aspects like a fractal, and in particular is self-similar.

We conclude this section by stating the \emph{Hausdorff-Cantelli Lemma}\footnote{This is referred as the Hausdorff-Cantelli lemma in \cite{BD} for convenience and should not be taken as a joint work.}, which will be used repeatedly in \S{\ref{s3}}. 
\begin{lem}[Hausdorff-Cantelli]\label{l0}
Let $H_i$ be a sequence of intervals in $\mathbb{R}$ and suppose that for some $s>0$,
$$
\sum_{i}(|H_i|)^s<\infty,
$$
then $\mathcal{H}^s(\limsup H_i)=0$.
\end{lem}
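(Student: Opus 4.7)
The plan is to work directly from the definition of $\mathcal{H}^s$ recalled in \S{\ref{s2}}. The crucial observation is that for any $N$, the tail family $\{H_i\}_{i\ge N}$ already covers $\limsup_i H_i$ by the definition of limsup (a point of the limsup lies in infinitely many $H_i$, hence in at least one $H_i$ with $i\ge N$). Thus the tails are automatic candidate covers; the only question is whether they qualify as $\rho$-covers.

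First, I would extract from the convergence of $\sum_i |H_i|^s$ together with $s>0$ the fact that $|H_i|\to 0$. Given $\rho>0$, this lets me choose $N=N(\rho)$ so large that $|H_i|<\rho$ for all $i\ge N$. The family $\{H_i\}_{i\ge N}$ then forms a legitimate $\rho$-cover of $\limsup_i H_i$, so the definition of $\mathcal{H}^s_\rho$ yields
$$\mathcal{H}^s_\rho(\limsup_i H_i)\;\le\;\sum_{i\ge N(\rho)}|H_i|^s.$$

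Second, I would observe that the right-hand side is a tail of a convergent series and $N(\rho)\to\infty$ as $\rho\to 0$, so it tends to $0$. Letting $\rho\to 0$ gives $\mathcal{H}^s(\limsup_i H_i)=0$. There is essentially no obstacle here; the argument is the Hausdorff-measure analogue of the convergence half of the Borel--Cantelli lemma, and the only subtlety — that the covering sets must have diameter less than $\rho$ — is handled automatically by $|H_i|\to 0$. In particular, neither the assumption that the $H_i$ are intervals nor that they lie in $\mathbb{R}$ is actually used, so the same proof delivers the statement for arbitrary sets in any metric space.
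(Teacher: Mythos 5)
Your proof is correct, and it is the standard one. The paper itself does not prove this lemma --- it merely states it with a footnote pointing to Bernik--Dodson \cite{BD} --- so there is no in-paper argument to compare against, but your covering argument is exactly what one finds there. One small point of rigour: rather than arguing that $N(\rho)\to\infty$ as $\rho\to 0$ (which can fail in degenerate cases where only finitely many $H_i$ are nonempty, though then the conclusion is trivial anyway), it is cleaner to fix $\varepsilon>0$, pick $N_0$ with $\sum_{i\ge N_0}|H_i|^s<\varepsilon$, and then pick $N_1\ge N_0$ with $|H_i|<\rho$ for $i\ge N_1$; this shows $\mathcal{H}^s_\rho(\limsup H_i)<\varepsilon$ for every $\rho>0$, hence $\mathcal{H}^s(\limsup H_i)\le\varepsilon$, and $\varepsilon$ was arbitrary. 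Your closing remark about generalizing beyond intervals in $\mathbb{R}$ is also right in spirit; the only caveat is that the paper's definition of $\mathcal{H}^s$ uses covers by balls, so for arbitrary $H_i$ in a general metric space one either passes to the usual diameter-based Hausdorff measure or replaces each $H_i$ by a ball of comparable diameter, both of which change $\mathcal{H}^s$ by at most a bounded factor and hence do not affect the nullity conclusion.
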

Here $|H_i|$ is the Lebesgue measure of $H_i$ and the limsup set
$$
\limsup H_i:=\bigcap_{i=1}^\infty\bigcup_{j=i}^\infty H_j
$$
contains exactly those elements that belong to infinitely many of $H_i$.

\section{Proof of the main theorem}\label{s3}

In this section, we embark on the proof of the Main Theorem. For convenience we make some standard simplifications.

By changing coordinates, a $C^2$ planar curve $\mathcal{C}$ is locally represented by graphs $\mathcal{C}_f:=\{(x,f(x)):x\in I\}$ of $C^2$ functions $f$ on some compact interval $I$. 

Clearly, the local coordinate map $\mathbf{f}: I\rightarrow\mathcal{C}_f$ which sends $x$ to $(x,f(x))$ is bi-Lipschitz and hence for any set $\mathcal{A}\in I$ we have $$\mathcal{H}^s(\mathcal{A})\asymp \mathcal{H}^s(\mathbf{f}(\mathcal{A})).$$

Let $B:=\{x\in I:f''(x)=0\}$. Recall that non-degeneracy at a point $x$ simply means that $f''(x)\not=0$. Since $\mathcal{C}$ is non-degenerate everywhere except on a set of zero Hausdorff $s$-measure, we have $\mathcal{H}^s(B)=0$. By continuity, $B$ is a closed set in $I$, hence a simple measure-theoretic argument allows us to write $I\setminus B$ as a countable union of bounded open intervals $I_i$ on which $f$ satisfies 
\begin{equation}\label{e20}
0<c_1\le|f''(x)|\le c_2<\infty.
\end{equation}
for all $x\in I_i$. Here the positive constants $c_1$ and $c_2$ depend on the particular choice of interval $I_i$. Since the Hausdorff measure is countably subadditive, to prove the Main Theorem for all $C^2$ non-degenerate planar curves, it suffices to prove it for all arcs $\mathcal{C}_f:=\{(x,f(x)):x\in I\}$ associated with some $C^2$ function $f$ whose second derivative is bounded and bounded away from zero. Thus, without loss of generality, we will assume $f$ satisfies \eqref{e20} on $I$.

Let $\psi(q):\mathbb{N}\rightarrow\mathbb{R}^{+}$ be an approximation function. Write $I:=[a,b]$. For $(q_1,q_2,p)\in\mathbb{Z}^3$, let
\begin{equation}\label{e1.1}
\mu(q_1,q_2,p)=\{x\in I:|q_1x+q_2f(x)-p|<\psi(q)\}
\end{equation}
where $q=\max\{|q_1|,|q_2|\}$. We are primarily interested in the measure theoretic properties of the set
\begin{equation}\label{e1.2}
\mu(\psi)=\{x\in I:\exists^\infty (q_1,q_2,p)\in\mathbb{Z}^3\text{ such that }x\in\mu(q_1,q_2,p)\}.
\end{equation}•

Notice that $\mathcal{H}^s(\mu(\psi))=0$ if and only if $\mathcal{H}^s(A_2(\psi)\cap\mathcal{C}_f)=0$. Thus the convergence case of the Main Theorem  reduces to

\begin{prop}\label{t1}
Let $s\in(0,1]$. Then
\[
\mathcal{H}^s(\mu(\psi))=
0\quad\text{if}\quad\sum_{q=1}^\infty \psi(q)^sq^{2-s}\text{ converges}.
\]
\end{prop}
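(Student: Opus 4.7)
The overall plan is to apply the Hausdorff--Cantelli lemma (Lemma~\ref{l0}) to the decomposition $\mu(\psi)=\limsup\mu(q_1,q_2,p)$, so that it suffices to show $\sum|\mu(q_1,q_2,p)|^s<\infty$, the sum ranging over all triples whose corresponding set is nonempty. First I would dispose of the degenerate cases $q_1=0$ or $q_2=0$ by the classical one-dimensional Khintchine convergence argument, and then organize the remaining triples into dyadic blocks $q:=\max(|q_1|,|q_2|)\in[Q,2Q)$ for $Q=2^k$, aiming to bound the block sum by $\ll\psi(Q)^s Q^{3-s}$, which sums to a finite quantity under the hypothesis.

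Setting $F(x):=q_1 x+q_2 f(x)-p$, we have $F'(x)=q_1+q_2 f'(x)$ and $F''(x)=q_2 f''(x)$, with $|F''|\asymp Q$ on $I$ by~\eqref{e20}. Within each block I would separate the triples into a \emph{non-resonant} class, where $|F'|$ exceeds some threshold $T$ throughout $I$, and a \emph{resonant} class, where $F'$ vanishes at some $x_0\in I$. In the non-resonant regime, a further dyadic decomposition of $T$ combined with the elementary bound $|\mu(q_1,q_2,p)|\le 2\psi(q)/T$ and the standard count that for each $(q_1,q_2)$ there are $\ll Q$ admissible values of $p$ (namely $|p|\ll Q$) should deliver the desired summability directly from the hypothesized series.

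The real substance lies in the resonant case. Near $x_0$, $F$ vanishes quadratically, so $|\mu(q_1,q_2,p)|\ll\sqrt{\psi(q)/Q}$. The key observation is that a resonant triple corresponds, via the substitution $u:=-q_1/q_2$, $v:=-p/q_2$, to a rational point $(u,v)$ with denominator $\sim Q$ lying within $\ll\psi(Q)/Q$ of the \emph{dual curve} $\mathcal{C}_{f^*}:=\{(t,tg(t)-f(g(t))):t\in f'(I)\}$, where $g$ is the inverse of $f'$. Since $(f^*)''(t)=1/f''(g(t))$ is bounded and bounded away from zero on $f'(I)$, $\mathcal{C}_{f^*}$ is itself a $C^2$ planar curve of nonvanishing curvature, to which Huxley's counting theorem (Lemma~\ref{l3}) applies with parameters $\delta:=\psi(Q)/Q$ and $R:=Q$.

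Putting the pieces together, the resonant contribution in the $Q$-block is bounded by (number of resonant triples)$\times(\psi(Q)/Q)^{s/2}$; plugging in the bound $\ll\delta^\alpha R^\beta+R^{3/2-\varepsilon}$ and matching against the target $\psi(Q)^s Q^{3-s}$ shows that precisely the condition $\beta<3/2+2\alpha$ alluded to in the introduction is what makes the main term fit, with the error $Q^{3/2-\varepsilon}$ causing no trouble since $s\le 1$. The principal obstacle I anticipate is the resonant case: beyond executing the dual-curve reformulation, one must verify that the correspondence between resonant triples and dual rational points is essentially injective, and carefully handle boundary and overlap issues (triples whose resonant point lies just outside $I$, triples in the same dyadic range sharing the same $(u,v)$, and the transition region between the resonant and non-resonant regimes) so that Huxley's estimate can be invoked cleanly without double-counting.
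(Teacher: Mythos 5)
Your overall architecture -- Hausdorff--Cantelli to reduce to a convergent series, a dyadic decomposition in $Q$, and above all the passage to the \emph{dual curve} $f^*$ so that Huxley's rational-points-near-a-curve bound controls the resonant triples -- matches the paper's strategy, and your identification that $(f^*)''=1/f''\circ g$ stays bounded away from zero is exactly what makes Lemma~\ref{l3} applicable. However there is a genuine gap in the resonant case, and a smaller one in the non-resonant case.

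The resonant case: you bound $|\mu(q_1,q_2,p)|\ll\sqrt{\psi/Q}$ and then claim that each resonant triple with $\mu\neq\emptyset$ corresponds to a dual rational $(u,v)$ within $\ll\psi(Q)/Q$ of $\mathcal{C}_{f^*}$. That claim is false. With $p_0$ the nearest integer to $F(x_0)$, the dual distance is $\|F(x_0)\|/|q_2|$, and $\|F(x_0)\|$ can be anywhere up to $1/2$; yet $\mu(q_1,q_2,p_0)$ can still be nonempty whenever $F$ attains $p_0$ somewhere on $I$ (e.g.\ $F(x_0)-p_0<-\psi$ for a convex $F$). Thus in a given dyadic block there may be $\asymp Q^2$ resonant pairs with $\mu\neq\emptyset$ but dual distance $\gg\psi/Q$, and Huxley's Lemma~\ref{l3} says nothing about these. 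Applying the uniform bound $\sqrt{\psi/Q}$ to all $\asymp Q^2$ of them gives a block contribution $\asymp Q^{2-s/2}\psi^{s/2}$, which for small $\psi$ (e.g.\ $\psi\asymp Q^{-2}$ at $s=1$) diverges when summed over $Q$. The paper resolves this by splitting on the size of $\|F(x_0)\|$: for $\|F(x_0)\|\ge 2\psi$ the sharper bound $|\mu|\ll\psi\,(Q\|F(x_0)\|)^{-1/2}$ holds (this is where Lemma~\ref{l1}, linking $|F'|$ to $|F-F(x_0)|^{1/2}$, enters), and the resulting sum is a \emph{weighted} count which the paper controls via Lemma~\ref{l4}, a dyadic consequence of Huxley. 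Your outline never produces Lemma~\ref{l4} or its role, and without it the resonant sum does not close.

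The non-resonant case: the combination ``$|\mu|\le 2\psi/T$'' with ``$\ll Q$ admissible $p$ per $(q_1,q_2)$'' bounds the block by $Q^3(\psi/T)^s$, which matches the target $\psi^s Q^{3-s}$ only for $T\gg Q$ (essentially the paper's $\Theta_1$). For moderate $T$, you would need a finer count -- e.g.\ that the number of pairs $(q_1,q_2)$ with $\min_I|F'|\asymp T$ is $\ll Q(T+1)$, and even then the crude $p$-count loses a logarithm at $s=1$. The paper sidesteps this entirely: instead of stratifying by $\min_I|F'|$, it uses the identity $|F'(x)|\asymp(q|F(x)-F(x_0)|)^{1/2}$ to show $|\mu(q_1,q_2,p)|\ll\psi(q|p-p_0|)^{-1/2}$ for $p\neq p_0$, so the $p$-sum converges cleanly without any extra logarithm, and the resonance analysis is concentrated entirely on the single $p=p_0$. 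I would encourage you to adopt that device, as it both fills the non-resonant gap and naturally sets up the $\|F(x_0)\|$ dichotomy needed for the resonant one.
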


First of all, due to the convexity assumption \eqref{e20}, we observe the fact that $\mu(q_1,q_2,p)$ is either an interval or the union of two intervals.
Hence in view of the Hausdorff-Cantelli Lemma, to prove the proposition it suffices to show the convergence of the series
\begin{equation}\label{e3.1}
\sum_{\substack{q_1,q_2,p\in\mathbb{Z}\\(q_1,q_2)\not=(0,0)}}|\mu(q_1,q_2,p)|^s
\end{equation} 
provided that 
\begin{equation}\label{e3.2}
\sum_{q=1}^\infty\psi(q)^sq^{2-s}<\infty.
\end{equation}

We may assume, by a standard reduction argument, 
\begin{equation}\label{e3.0}
\psi(q)\ge q^{1-\frac{3+\varepsilon_0}s}
\end{equation}
with some fixed small $\varepsilon_0>0$. Because if $\psi$ does not satisfy \eqref{e3.0}, we may take a new approximation function
$$\hat\psi(q):=\max\left\{\psi(q),q^{1-\frac{3+\varepsilon_0}s}\right\},$$
which satisfies \eqref{e3.0}.
One can easily check that 
$$\sum_{q=1}^\infty\hat\psi(q)^sq^{2-s}<\infty.$$
We may define the corresponding set $\hat\mu(q_1,q_2,p)$ as in \eqref{e1.1} with $\psi$ replaced by $\hat\psi$. Clearly the inclusion relation
$$\mu(q_1,q_2,p)\subseteq\hat\mu(q_1,q_2,p)$$
holds since $\hat\psi(q)\ge\psi(q)$. So if we can show $\mathcal{H}^s(\mu(\hat\psi))=0$, then $\mathcal{H}^s(\mu(\psi))=0$ follows.
Hence without loss of generality, for the rest of the argument, we will assume \eqref{e3.0}.

We will divide the set $\mathbb{Z}^2\backslash(0,0)$ of all possible choices for $(q_1,q_2)$ into a couple of subsets, which will be dealt with separately. We also notice that for given $(q_1,q_2)$, there are only finitely many $p\in\mathbb{Z}$ such that $\mu(q_1,q_2,p)\not=\emptyset$. Indeed, in view of the definition \eqref{e1.1} such $p$ must satisfy
\begin{equation}\label{e3.3}
|p|\le C q=C\max\{|q_1|,|q_2|\}
\end{equation}
where $$C=\max_{x\in I}\{|x|+|f(x)|+1\}.$$

Let
$$M=1+\max_{x\in I}|f'(x)|$$
and
$$\Theta_1=\{(q_1,q_2)\in \mathbb{Z}\times\mathbb{Z}:|q_1|>2M|q_2|\}$$
and
$$\Theta_2=\mathbb{Z}^2\setminus\Theta_1\cup(0,0).$$

We state a lemma (see \cite[Lemma 9.7]{Ha}) that will be used repeatedly in our argument. 

\begin{lem}\label{l2}
Let $h(x)\in C^2(I)$ be such that $\min_{x\in I}|h'(x)|=\delta_1$ and $\min_{x\in I}|h''(x)|=\delta_2$. For $\eta>0$, define
$$E(\eta):=\{\eta\in I:|h(x)|<\eta\}.$$ Then we have
$$|E(\eta)|\ll\min\left(\frac\eta{\delta_1},\sqrt{\frac{\eta}{\delta_2}}\,\right).$$
\end{lem}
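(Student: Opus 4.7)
The plan is to establish the two upper bounds separately and combine them. For the $\eta/\delta_1$ bound, I would first note that $h'$ is continuous and satisfies $|h'|\ge\delta_1>0$ on $I$, so $h'$ cannot change sign and hence $h$ is strictly monotone on $I$. Therefore $E(\eta)=h^{-1}((-\eta,\eta))$ is an interval. For any two points $x_1<x_2$ lying in $E(\eta)$, the mean value theorem gives $|h(x_2)-h(x_1)|\ge\delta_1(x_2-x_1)$, while the definition of $E(\eta)$ forces $|h(x_2)-h(x_1)|<2\eta$. Taking $x_1,x_2$ as endpoints of the interval $E(\eta)$ yields $|E(\eta)|\le 2\eta/\delta_1$.

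For the $\sqrt{\eta/\delta_2}$ bound, the same sign-constancy argument applied to $h''$ shows that $h$ is either convex or concave on $I$; without loss of generality $h''\ge\delta_2>0$, so $h$ is convex. Then $\{h<\eta\}$ and $\{h\le-\eta\}$ are each intervals (possibly empty), so $E(\eta)=\{h<\eta\}\setminus\{h\le-\eta\}$ decomposes into at most two subintervals. On any closed subinterval $[\alpha,\beta]\subseteq\overline{E(\eta)}$ I would expand at the midpoint $c=(\alpha+\beta)/2$: with $r=(\beta-\alpha)/2$, Taylor's theorem with Lagrange remainder gives
$$h(\alpha)+h(\beta)-2h(c)=\tfrac{r^2}{2}\bigl(h''(\xi_1)+h''(\xi_2)\bigr)\ge r^2\delta_2$$
for some $\xi_1,\xi_2\in[\alpha,\beta]$. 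Since each of $|h(\alpha)|,|h(\beta)|,|h(c)|$ is at most $\eta$, the left-hand side is at most $4\eta$ in absolute value, whence $\beta-\alpha=2r\le 4\sqrt{\eta/\delta_2}$. Summing over the at most two components yields $|E(\eta)|\ll\sqrt{\eta/\delta_2}$, and combining with the first bound gives the stated estimate.

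I do not anticipate a serious obstacle. The only step worth flagging is the decomposition of $E(\eta)$ into at most two intervals, which relies crucially on the sign-constancy of $h''$; without a uniform lower bound on $|h''|$ a generic $C^2$ function can oscillate across the strip $|h|<\eta$ many times, and the quadratic bound would then have to be proved locally on each monotonicity interval of $h'$ and summed — a step that is here rendered trivial by the hypothesis $\min|h''|=\delta_2>0$.
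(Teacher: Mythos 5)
The paper offers no proof of this lemma; it is simply quoted as Lemma~9.7 from Harman's \emph{Metric Number Theory} \cite{Ha}, so there is no in-paper argument to compare against. Your proof is correct and complete. The first bound is the direct mean-value estimate, valid when $\delta_1>0$ and vacuous otherwise since the right-hand side is then $+\infty$. For the second bound you correctly use that constant sign of $h''$ makes the sublevel sets $\{h<\eta\}$ and $\{h\le-\eta\}$ convex (hence intervals), so $E(\eta)$ has at most two components, and the symmetric second-difference Taylor identity $h(\alpha)+h(\beta)-2h(c)=\tfrac{r^2}{2}\bigl(h''(\xi_1)+h''(\xi_2)\bigr)\ge r^2\delta_2$ then caps each component's length at $4\sqrt{\eta/\delta_2}$; that the midpoint $c$ also satisfies $|h(c)|\le\eta$ is guaranteed because $[\alpha,\beta]\subseteq\overline{E(\eta)}\subseteq\{|h|\le\eta\}$. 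The resulting constants $2\eta/\delta_1$ and $8\sqrt{\eta/\delta_2}$ are immaterial for the $\ll$ statement. Your closing remark correctly identifies the role of the hypothesis $\min|h''|=\delta_2>0$ in forcing the two-interval decomposition; this is indeed the structural point that makes the quadratic bound global rather than merely local.
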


We consider the case $(q_1,q_2)\in\Theta_1$ first. In this case, we have
$$|q_1+q_2f'(x)|\ge|q_1|-M|q_2|\ge\frac{|q_1|}2.$$
Now by Lemma \ref{l2}, we know
$$
|\mu(q_1,q_2,p)|\ll \frac{\psi(|q_1|)}{|q_1|}.
$$
Moreover, for given $q_1$, there are at most $\ll q_1^2$ possible choices for $q_2$ and $p$. 
Therefore the total contribution from the $(q_1,q_2)\in \Theta_1$ to \eqref{e3.1} is
\begin{align*}
&\sum_{\substack{(q_1,q_2)\in\Theta_1\\p\in\mathbb{Z}}}|\mu(q_1,q_2,p)|^s\\
{\ll}&\sum_{q_1\in \mathbb{Z}^*}(\psi(|q_1|)/|q_1|)^s|q_1|^2
\end{align*}
which is convergent by \eqref{e3.2}.

From now on, we treat the much more difficult case $(q_1,q_2)\in\Theta_2$. Clearly  in this case $q_2\not=0$ and moreover $|q_2|\asymp q$.
For convenience, we may extend the definition of $f(x)$ to $\mathbb{R}$  by taking the second order Taylor expansions at the end points of $I$. Namely
let 
$$f(x)=f(b)+f'(b)(x-b)+\frac{f''(b)}2(x-b)^2$$
when $x>b$
and
$$f(x)=f(a)+f'(a)(x-a)+\frac{f''(a)}2(x-a)^2$$
when $x<a$.
Clearly the extended $f$ satisfies $f\in C^2(\mathbb{R})$ and $c_1\le |f''|\le c_2$. Since $f''$ does not change sign throughout $\mathbb{R}$, $f'$ is strictly monotonic on $\mathbb{R}$ and has range $(-\infty,\infty)$. Let $g(y):\mathbb{R}\rightarrow\mathbb{R}$ be the inverse function of $-f'(x)$. To this end,
let $$x_0:=g(q_1/q_2)$$ 
which is the unique point $x_0\in\mathbb{R}$ such that
\begin{equation}\label{e3.4}
q_1+q_2f'(x_0)=0.
\end{equation}•
Let $J=[-2M,2M]$ and $I'=g(J)\supset I$. So $x_0\in I'$. 
Note that
$$g'(y)=\frac1{f''(g(y))}$$
and hence that
\begin{equation}\label{e3.12}
c_2^{-1}\le|g'(y)|\le c_1^{-1}
\end{equation}
for all $y\in\mathbb{R}$. Thus by the mean value theorem
\begin{equation}\label{e3.5}
|I'|\le c_1^{-1}|J|= 4c_1^{-1}M.
\end{equation}

Now, we let
$$F(x)=q_1x+q_2f(x)$$ with $q_1/q_2\in J$. Then

\begin{lem}\label{l1}
$$|F'(x)|\asymp |q_2(F(x)-F(x_0))|^{1/2}$$
where the $\asymp$ constants depend only on $c_1$, $c_2$.
\end{lem}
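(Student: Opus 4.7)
The plan is to exploit two facts: first, $x_0$ is defined so that $F'(x_0) = 0$, making $F$ behave near $x_0$ like a quadratic; second, $F''(x) = q_2 f''(x)$ has magnitude comparable to $|q_2|$ thanks to the uniform bounds $c_1 \le |f''| \le c_2$ from \eqref{e20}. Everything then follows from two applications of the mean value theorem / Taylor's formula.

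More concretely, I would first write
\[
F'(x) = F'(x) - F'(x_0) = q_2\bigl(f'(x) - f'(x_0)\bigr) = q_2 f''(\eta_1)(x - x_0)
\]
for some $\eta_1$ between $x$ and $x_0$, yielding $|F'(x)| \asymp |q_2|\,|x - x_0|$ since $|f''|$ is pinched between $c_1$ and $c_2$ on the extended domain. Next, by the second order Taylor expansion at $x_0$ (using $F'(x_0) = 0$),
\[
F(x) - F(x_0) = \tfrac{1}{2} F''(\eta_2)(x - x_0)^2 = \tfrac{1}{2} q_2 f''(\eta_2)(x - x_0)^2,
\]
which gives $|F(x) - F(x_0)| \asymp |q_2|\,(x - x_0)^2$. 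Multiplying by $|q_2|$ and taking square roots,
\[
|q_2(F(x) - F(x_0))|^{1/2} \asymp |q_2|\,|x - x_0| \asymp |F'(x)|,
\]
with implied constants depending only on $c_1$ and $c_2$, as claimed.

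The one bookkeeping point to be careful about is that $x$ is allowed to range over the original interval $I$ while $x_0 = g(q_1/q_2)$ may lie in the slightly larger interval $I'$; this is precisely why the extension of $f$ beyond $I$ via second-order Taylor polynomials was performed, ensuring $f \in C^2(\mathbb{R})$ with $c_1 \le |f''| \le c_2$ globally so that both Taylor expansions above are valid on the segment joining $x$ and $x_0$. There is no real obstacle here; the lemma is essentially a restatement of the fact that a $C^2$ function with non-vanishing second derivative grows quadratically away from its critical point, with derivative growing linearly.
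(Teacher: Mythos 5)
Your proof is correct and takes essentially the same approach as the paper: both rest on the mean value theorem applied to $F'$ (giving $|F'(x)|\asymp|q_2||x-x_0|$) and a second-order Taylor expansion of $F$ about $x_0$ with $F'(x_0)=0$ (giving $|F(x)-F(x_0)|\asymp|q_2|(x-x_0)^2$), combined using the global bounds $c_1|q_2|\le|F''|\le c_2|q_2|$ on the extended domain. The paper merges the two expansions algebraically before invoking the bounds, while you apply the bounds to each expansion separately and then combine, but this is a cosmetic rearrangement of the same argument.
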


\begin{proof}
Applying Taylor's theorem of order 2 about $x=x_0$ for $F(x)$ and noting $F'(x_0)=0$ by \eqref{e3.4}, we obtain
\begin{equation}\label{e3.6}
F(x)=F(x_0)+\frac{F''(\xi_1)}2(x-x_0)^2
\end{equation}
for some $\xi_1$ between $x$ and $x_0$.

Moreover, by the mean value theorem,
\begin{equation}\label{e3.7}
F'(x)=F'(x)-F'(x_0)=F''(\xi_2)(x-x_0)
\end{equation}
for some $\xi_2$ between $x$ and $x_0$.

Hence merging $\eqref{e3.6}$ and $\eqref{e3.7}$ yields
$$F(x)-F(x_0)=\frac{F''(\xi_1)}2\left(\frac{F'(x)}{F''(\xi_2)}\right)^2.$$
The lemma immediately follows on noticing that
$$c_1|q_2|\le|F''(x)|\le c_2|q_2|.$$

\end{proof}

Now let $p_0$ be the unique integer such that
\begin{equation}\label{e3.8}
-\frac12<F(x_0)-p_0\le\frac12.
\end{equation}
If $p\not=p_0$, then for $x\in\mu(q_1,q_2,p)$
\begin{align*}
|F(x)-F(x_0)|&=|p-p_0+F(x)-p-F(x_0)+p_0|\\
&\ge|p-p_0|-|F(x)-p|-|F(x_0)-p_0|\\
&\overset{\eqref{e3.8}}{\ge}|p-p_0|-\psi(q)-1/2\\
&\ge\frac13|p-p_0|
\end{align*}
provided that 
\begin{equation}\label{e3.9}
\psi(q)\le \frac1{8}.
\end{equation} 
Since $\psi(q)$ decreases monotonically to 0 as $q\rightarrow\infty$, there exists $q_0\in\mathbb{N}$ such that \eqref{e3.9} holds when $q\ge q_0$.

By Lemma \ref{l1} and then Lemma \ref{l2}, we get, if $p\not=p_0$ and $q\ge q_0$ then
\begin{equation}\label{e10}
|\mu(q_1,q_2,p)|\ll \psi(q)(|q_2||p-p_0|)^{-1/2}.
\end{equation}
Therefore for fixed $(q_1,q_2)\in\Theta_2$ with $\max\{|q_1|,|q_2|\}\ge q_0$
\begin{align*}
\sum_{p\not=p_0}|\mu(q_1,q_2,p)|^s&\ll \sum_{p\not=p_0}\psi(q)^s(q|p-p_0|)^{-s/2}\\
&\overset{\eqref{e3.3}}{\ll}\psi(q)^sq^{-s/2}q^{1-s/2}\\
&\ll \psi(q)^sq^{1-s}.
\end{align*}
For fixed $q$, there are at most $\ll q$ choices for $q_1, q_2$. Hence
$$\sum_{\substack{(q_1,q_2)\in\Theta_2\\\max\{|q_1|,|q_2|\}\ge q_0\\p\not=p_0}}|\mu(q_1,q_2,p)|^s\ll \sum_{q}\psi(q)^sq^{2-s}$$
which converges.

Hitherto, we are left with the most difficult case $p=p_0$. For $x\in\mu(q_1,q_2,p_0)$, we have
\begin{align*}
|F(x)-F(x_0)|&=|F(x)-p_0+p_0-F(x_0)| \\
&\ge\|F(x_0)\|-\psi(q)\\
&\ge\frac12\|F(x_0)\|
\end{align*}
provided that
$$\|F(x_0)\|\ge2\psi(q).$$
By Lemma \ref{l1} 
$$|F'(x)|\gg (q\|F(x_0)\|)^{1/2}.$$
Then by Lemma \ref{l2}
\begin{equation}\label{e3.10}
|\mu(q_1,q_2,p_0)|\ll\frac{\psi(q)}{q^{1/2}}\|F(x_0)\|^{-1/2}.
\end{equation}
On the other hand, if $\|F(x_0)\|<2\psi(q)$, 
then
\begin{equation}\label{e3.11}
|\mu(q_1,q_2,p_0)|\overset{\text{Lem.}\ref{l2}}{\ll} \sqrt{\frac{\psi(q)}q}.
\end{equation}

We now define the dual curve $f^*(y)$ of $f(x)$, whose derivative is the inverse function of $-f'(x)$. Namely
$$f^*(y):=yg(y)+f(g(y)).$$
It is readily seen that
\begin{equation}\label{e3.13}
(f^*)'(y)=g(y)+yg'(y)+f'(g(y))g'(y)=g(y)
\end{equation}
and that
\begin{equation}\label{e3.14}
q_2f^*(q_1/q_2)=q_1g(q_1/q_2)+q_2f(g(q_1/q_2))=q_1x_0+q_2f(x_0)=F(x_0).
\end{equation}

The following lemma, which can be found as Lemma 2.2 in \cite{VV}, is essentially due to Huxley \cite{Hu}.

\begin{lem}[Huxley]\label{l3}
Suppose that $\phi$ has a continuous second derivative on a bounded interval $\Gamma$ which is bounded away from 0, and let $\delta\in(0,\frac14)$. Then for any $\varepsilon>0$ and $R\ge1$,
$$\sum_{R\le r<2R}\sum_{\substack{t/r\in\Gamma\\\|r\phi(t/r)\|<\delta}}1\ll_\varepsilon \delta^{1-\varepsilon}R^2+R\log(2R).$$ 
\end{lem}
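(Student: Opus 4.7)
The plan is to reinterpret the sum as counting rational points in a thin neighbourhood of the graph of $\phi$ and then apply a determinant (Bombieri--Pila--Huxley) argument. Given a pair $(r,t)$ contributing to the sum, there is a unique integer $s$ with $|r\phi(t/r)-s|<\delta<1/4$, so the rational point $P=(t/r,s/r)$ lies within vertical distance $\delta/r\asymp\delta/R$ of the graph of $\phi$. Hence it suffices to bound the number $N$ of rational points with denominator in $[R,2R)$ lying in the strip
\[
\mathcal{S}:=\{(x,y):x\in\Gamma,\;|y-\phi(x)|<\delta/R\}.
\]

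For the main contribution, partition $\Gamma$ into sub-intervals of a common length $L$ to be chosen. Suppose three distinct rational points $P_i=(t_i/r_i,s_i/r_i)$, $i=1,2,3$, with denominators in $[R,2R)$ and first coordinates in a single sub-interval, all lie in $\mathcal{S}$. Integrality of $(t_i,s_i,r_i)$ forces
\[
\bigl|\det(P_2-P_1,\,P_3-P_1)\bigr|\ge\frac{1}{r_1r_2r_3}\gg R^{-3}
\]
unless the three points are collinear. On the other hand, writing each $P_i=(x_i,\phi(x_i)+\varepsilon_i)$ with $|\varepsilon_i|<\delta/R$ and $|x_i-x_j|\le L$, a Taylor and divided-difference computation bounds this triangle area by
\[
\ll L^3\|\phi''\|_\infty+L\cdot\frac{\delta}{R}.
\]
Comparing the two estimates, a non-collinear triple can occur only when $L\gg\min\{R^{-1},\delta^{-1}R^{-2}\}$; with $L$ chosen just below this threshold, each sub-interval contains at most two non-collinear contributing points, and summing over the $\asymp|\Gamma|/L$ sub-intervals gives a main contribution of order $\delta R^2+R$. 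A dyadic iteration over admissible scales of $L$ costs at most a factor $\delta^{-\varepsilon}$, yielding the first term $\delta^{1-\varepsilon}R^2$.

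The remaining case is that of collinear configurations, where the $P_i$ lie on a common rational line $\ell$ of reduced slope $u/v$ with $v\ll R$. Since $\phi''$ is bounded away from $0$ on $\Gamma$, the line $\ell$ meets the strip $\mathcal{S}$ in a sub-segment of length $\ll(\delta/R)^{1/2}$, and thus contains $O(R/v)$ rational points with denominators in $[R,2R)$. Summing over all admissible slopes together with the standard divisor bound $\sum_{v\le R}d(v)\ll R\log R$ produces exactly the second term $R\log(2R)$.

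The hard part is the collinear case: once the determinant argument collapses, one must count rational points on a rational line by an arithmetic (divisor) argument, and then verify that the total does not exceed $O(R\log R)$. This uses crucially the lower bound on $|\phi''|$, which forces any single secant to intercept the strip in a short segment, and it requires careful bookkeeping across dyadic scales in $L$ to ensure that the combined non-collinear plus collinear contributions accumulate no more than a $\delta^{-\varepsilon}$ loss overall.
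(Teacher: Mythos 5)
The paper does not prove this lemma; it is quoted verbatim from Vaughan--Velani \cite{VV} (their Lemma 2.2), where it is in turn derived from Huxley \cite{Hu}, so there is no ``paper's own proof'' to compare against. Your determinant (Bombieri--Pila--Huxley divided-difference) strategy is indeed the correct underlying method, and the non-collinear part of the analysis is sound: the lower bound $\gg R^{-3}$ from integrality versus the upper bound $\ll L^{3}\|\phi''\|_{\infty}+L\delta/R$ from the second divided difference is the right comparison, and at the threshold scale $L\asymp\min\{R^{-1},\delta^{-1}R^{-2}\}$ the $\asymp 1/L$ sub-intervals do contribute $\ll\delta R^{2}+R$ non-collinear points.

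The genuine gap, as you yourself flag, is the collinear case, and the account given does not close it. First, the assertion that a line of reduced slope $u/v$ meeting the strip in a segment of length $\ll(\delta/R)^{1/2}$ carries $O(R/v)$ rational points with denominators in $[R,2R)$ is not justified: fixing $r\in[R,2R)$, the numerators $t$ on such a segment run over an interval of length $\asymp(\delta R)^{1/2}$ and lie in a fixed residue class modulo $v$, giving $\ll(\delta R)^{1/2}/v+1$ points per $r$, hence $\ll R(\delta R)^{1/2}/v+R$ overall, which is not $O(R/v)$ in general. Second, after choosing $L$ below threshold, the conclusion is not that each sub-interval has at most two points; rather, it is that each sub-interval either has at most two points \emph{or all its points lie on one rational line} -- and the same line can straddle many sub-intervals, so a separate bookkeeping of the distinct lines (their slopes, their intercepts, their intersection lengths with the strip) is required. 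Third, the $\delta^{-\varepsilon}$ loss is not explained by ``a dyadic iteration over admissible scales of $L$'': your non-collinear contribution is already $\delta R^{2}+R$ without loss, so if the collinear part really cost only $R\log R$ the $\varepsilon$ would be unnecessary, contradicting the fact that Huxley's theorem intrinsically carries it. In Huxley's actual proof the $\varepsilon$ arises precisely from the recursive treatment of collinear configurations across scales, and this is where the argument must be made precise before the lemma can be considered proved.
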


Lemma \ref{l4} is a consequence of Lemma \ref{l3}.

\begin{lem}\label{l4}
Under the same conditions with Lemma \ref{l3}, for $\lambda\in(0,1)$ and $R\ge1$,
$$\sum_{R\le r<2R}\sum_{\substack{t/r\in\Gamma\\\|r\phi(t/r)\|\ge\delta}}\left\|r\phi\left(\frac{t}{r}\right)\right\|^{-\lambda}\ll R^2+\delta^{-\lambda}R\log(2R).$$ 
\end{lem}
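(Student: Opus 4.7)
The plan is to perform a dyadic decomposition on the values of $\|r\phi(t/r)\|$ in the range $[\delta,1/2]$ and apply Lemma \ref{l3} on each dyadic piece. For each integer $j \ge 0$ with $2^j\delta \le 1/2$, let
\begin{equation*}
N_j := \#\left\{(r,t) : R\le r<2R,\ t/r\in\Gamma,\ 2^j\delta \le \left\|r\phi\!\left(\tfrac{t}{r}\right)\right\| < 2^{j+1}\delta\right\}.
\end{equation*}
Since the condition $\|r\phi(t/r)\| < 2^{j+1}\delta$ contains the dyadic range, Lemma \ref{l3} applied with threshold $2^{j+1}\delta$ and some fixed small $\varepsilon>0$ (to be chosen) gives
\begin{equation*}
N_j \ll_\varepsilon (2^{j+1}\delta)^{1-\varepsilon} R^2 + R\log(2R).
\end{equation*}

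Now for each $(r,t)$ counted by $N_j$ the summand $\|r\phi(t/r)\|^{-\lambda}$ is at most $(2^j\delta)^{-\lambda}$, so the total sum to be estimated is
\begin{equation*}
\sum_{j\ge 0,\, 2^j\delta \le 1/2} (2^j\delta)^{-\lambda} N_j
\;\ll_\varepsilon\; \sum_{j} (2^j\delta)^{1-\varepsilon-\lambda} R^2 \;+\; \sum_{j} (2^j\delta)^{-\lambda} R\log(2R).
\end{equation*}
Here I would choose $\varepsilon = (1-\lambda)/2 > 0$ so that $1-\varepsilon-\lambda > 0$; then the first geometric series in $j$ is dominated by its largest term $j_{\max}$, for which $2^{j_{\max}}\delta \asymp 1$, yielding a contribution $\ll R^2$. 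The second series is geometric in the opposite direction (since $\lambda>0$) and is dominated by the $j=0$ term, giving $\ll \delta^{-\lambda} R \log(2R)$. Combining the two bounds produces exactly $R^2 + \delta^{-\lambda} R \log(2R)$, as required.

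The argument has essentially no obstacle beyond bookkeeping: the only point to watch is that the $\varepsilon$ of Lemma \ref{l3} must be chosen strictly less than $1-\lambda$ so that the geometric series in the first term actually converges (up to its top dyadic scale). One also has to note that $(r,t)$ pairs with $\|r\phi(t/r)\|\ge 1/2$ contribute nothing since $\|\cdot\|\le 1/2$ by definition, so the dyadic decomposition exhausts the sum. With these two caveats the derivation is a short, direct consequence of Lemma \ref{l3} and requires no additional harmonic-analytic input.
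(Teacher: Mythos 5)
Your proposal is correct and follows essentially the same route as the paper: restrict to small values of $\|r\phi(t/r)\|$ (the remaining pairs trivially contribute $\ll R^2$), decompose dyadically, apply Lemma~\ref{l3} on each dyadic block, fix $\varepsilon=(1-\lambda)/2$ so that the exponent $1-\varepsilon-\lambda$ is positive, and sum the two geometric series. The only cosmetic difference is your cutoff at $1/2$ versus the paper's at $1/4$ --- since Lemma~\ref{l3} is stated for $\delta\in(0,\tfrac14)$, the paper's choice keeps every invocation of that lemma literally within its hypothesis, but the discrepancy is harmless because the top dyadic block is handled trivially either way.
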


\begin{proof}
We can restrict our attention to the terms with $\|r\phi(t/r)\|<1/4$ since the terms with $\|r\phi(t/r)\|\ge1/4$ clearly contribute $\ll R^2$ in total to the sum.
We separate the values of $\|r\phi(t/r)\|$ into dyadic ranges and obtain
\begin{align*}
\sum_{R\le r<2R}\sum_{\substack{t/r\in\Gamma\\1/4>\|r\phi(t/r)\|>\delta}}\left\|r\phi\left(\frac{t}{r}\right)\right\|^{-\lambda}
&\ll\sum_{j\le\log_2\frac1{4\delta}}\sum_{R\le r<2R}\sum_{\substack{t/r\in\Gamma\\\|r\phi(t/r)\|<2^j\delta}}(2^{j-1}\delta)^{-\lambda}\\
&\overset{\text{Lem.}\ref{l3}}{\ll}\sum_{j}2^{-j\lambda}\delta^{-\lambda}\left((2^j\delta)^{1-\varepsilon}R^2+R\log(2R)\right)\\
&\ll 2^{(1-\lambda-\varepsilon)\log_2\frac1{4\delta}}\delta^{1-\lambda-\varepsilon}R^2+\delta^{-\lambda}R\log(2R)\\
&\ll R^2+\delta^{-\lambda}R\log(2R).
\end{align*}
Here we choose $\varepsilon=\frac{1-\lambda}2$ when applying Lemma \ref{l3}.
\end{proof}

Finally, we are poised to prove that the series
\begin{equation}\label{e3.15}
\sum_{(q_1,q_2)\in\Theta_2}|\mu(q_1,q_2,p_0)|^s<\infty
\end{equation}
and hence conclude the proof of the proposition. We further divide this into two cases, namely $\|F(x_0)\|<2\psi(q)$ and $\|F(x_0)\|\ge2\psi(q)$.

For any integer $k$ with $2^k\ge q_0$,
\begin{align*}
&\sum_{\substack{(q_1,q_2)\in\Theta_2\\2^{k}\le |q_2|<2^{k+1}\\\|F(x_0)\|<2\psi(q)}}|\mu(q_1,q_2,p_0)|^s\\
\overset{\eqref{e3.14}\&\eqref{e3.11}}{\ll}& \sum_{2^{k}\le q_2<2^{k+1}}\sum_{\substack{q_1/q_2\in J\\\|q_2f^*(q_1/q_2)\|<2\psi(2^k)}}\left(\frac{\psi(2^k)}{2^k}\right)^{\frac{s}2}\\
\overset{\text{Lem.}\ref{l3}}{\ll}&\left(\psi(2^k)^{1-\varepsilon}2^{2k}+k2^k\right)\left(\frac{\psi(2^k)}{2^k}\right)^{\frac{s}2}\\
\ll&\psi(2^k)^{1+\frac{s}2-\varepsilon}2^{(2-s/2)k}+k\psi(2^k)^\frac{s}2 2^{(1-s/2)k}.
\end{align*}
This is
$$\ll \psi(2^k)^s(2^k)^{3-s}$$
provided that
\begin{equation}\label{e3.16}
\psi(2^k)\ge (2^k)^{1-\frac{4-\varepsilon_1}s}
\end{equation}
for some small $\varepsilon_1>0$. Now recalling \eqref{e3.0}, the condition \eqref{e3.16} is always satisfied! Therefore
\begin{equation}\label{e3.17}
\sum_{\substack{(q_1,q_2)\in\Theta_2\\|q_2|\ge q_0\\\|F(x_0)\|<2\psi(q)}}|\mu(q_1,q_2,p_0)|^s\ll \sum_k\psi(2^k)^s(2^k)^{3-s}\overset{\eqref{e3.2}}{<}\infty.
\end{equation}

The other case can be treated in a similar fashion. 
\begin{align*}
&\sum_{\substack{(q_1,q_2)\in\Theta_2\\2^{k}\le |q_2|<2^{k+1}\\\|F(x_0)\|\ge2\psi(q)}}|\mu(q_1,q_2,p_0)|^s\\
\overset{\eqref{e3.14}\&\eqref{e3.10}}{\ll}& \sum_{2^{k}\le q_2<2^{k+1}}\sum_{\substack{q_1/q_2\in J\\\|q_2f^*(q_1/q_2)\|\ge2\psi(2^k)}}\left(\frac{\psi(2^k)}{2^{k/2}}\right)^s\|q_2f^*(q_1/q_2)\|^{-s/2}\\
\overset{\text{Lem.}\ref{l4}}{\ll}&\left(\psi(2^k)^{-s/2}2^{k}k+2^{2k}\right)\left(\frac{\psi(2^k)}{2^{k/2}}\right)^s\\
\ll&\psi(2^k)^{\frac{s}2}(2^k)^{1-s/2}k+\psi(2^k)^s (2^k)^{(2-s/2)}.
\end{align*}
Again this is
$$\ll \psi(2^k)^s(2^k)^{3-s}$$
provided that \eqref{e3.16} holds, which is guaranteed by \eqref{e3.0}. So
\begin{equation}\label{e3.18}
\sum_{\substack{(q_1,q_2)\in\Theta_2\\|q_2|\ge q_0\\\|F(x_0)\|\ge2\psi(q)}}|\mu(q_1,q_2,p_0)|^s\ll \sum_k\psi(2^k)^s(2^k)^{3-s}\overset{\eqref{e3.2}}{<}\infty.
\end{equation}

Now, \eqref{e3.15} follows from \eqref{e3.17} and \eqref{e3.18}. Then the proposition follows from \eqref{e3.1} and the Hausdorff-Cantelli lemma.

\proof[Acknowledgments]
The author is very grateful to Prof. Friedlander for providing generous feedbacks to improve the presentation of this paper.

\end{document}